\theoremstyle{plain}
\newtheorem{thm}{Theorem}
\newtheorem{prop}[thm]{Proposition}
\theoremstyle{definition}
\newtheorem{defn}[thm]{Definition}
\theoremstyle{remark}
\newtheorem*{remark*}{Remark}
        \newcommand{\field}[1]{{\mathbb{#1}}}
        \newcommand{\NN}{\field{N}}
        \newcommand{\ZZ}{\field{Z}}
        \newcommand{\RR}{\field{R}}
\begin{document}


\title[Quantization on symplectic manifolds of bounded geometry]{Berezin-Toeplitz quantization on symplectic manifolds of bounded geometry}

\author[Y. A. Kordyukov]{Yuri A. Kordyukov}
\address{Institute of Mathematics\\
         Russian Academy of Sciences\\
         112~Chernyshevsky str.\\ 450008 Ufa\\ Russia} \email{yurikor@matem.anrb.ru}


\subjclass[2000]{Primary 58J37; Secondary 53D50}

\keywords{symplectic manifold, Bochner Laplacian, manifolds of bounded geometry, Berezin-Toeplitz quantization}

\begin{abstract}
We establish the theory of Berezin-Toeplitz quantization on symplectic manifolds of bounded geometry. The quantum space of this quantization is the spectral subspace of the renormalized Bochner Laplacian associated with some interval near zero. We show that this quantization has the correct semiclassical limit. 
\end{abstract}

\date{}

 \maketitle

\section{Preliminaries and main results}
The Berezin-Toeplitz quantization is a version of geometric quantization, which was suggested by Berezin in \cite{Berezin74,Berezin75}. Since then, several approaches to Berezin-Toeplitz and geometric quantization of compact symplectic  manifolds have been developed  (see, for instance, survey papers \cite{Ali,Englis,ma:ICMtalk,Schlich10}). For a general compact K\"ahler manifold, the Berezin-Toeplitz quantization was constructed by Bordemann, Meinrenken and Schli\-chen\-maier \cite{BMS94}, using the theory of Toeplitz structures of Boutet de Monvel and Guillemin \cite{BG}. Much less is known about quantization of noncompact symplectic manifolds. 

The main goal of this paper is to establish the theory of  Berezin-Toeplitz quantization on a large class of noncompact symplectic manifolds. More precisely, we consider a symplectic manifold $(X,\omega)$ of dimension $2n$, satisfying the following assumptions: 
\begin{itemize}
\item[(i)] $X$ is endowed with a Riemannian metric $g$ such that $(X,g)$ is a Riemannian manifold of bounded geometry and the symplectic form $\omega$ is uniformly $C^\infty$-bounded on $(X,g)$.
\item[(ii)] $\omega$ is uniformly non-degenerate on $X$. 
\item[(iii)] $(X,\omega)$ is quantizable. 
\end{itemize}

Recall that a Riemannian manifold $(X,g)$ is a manifold of bounded geometry, if the curvature $R^{TX}$ of the Levi-Civita connection $\nabla^{TX}$ of $(X, g)$ and its derivatives of any order are uniformly bounded on $X$ in the norm induced by $g$, and the injectivity radius $r_X$ of $(X, g)$ is positive. In particular, the Riemannian manifold $(X,g)$ is complete.  A differential form on $X$ is uniformly $C^\infty$-bounded, if it and its derivatives of any order are uniformly bounded on $X$ in the norm induced by $g$. 

For $x\in X$, let $B_x : T_xX\to T_xX$ be the skew-adjoint operator such that 
\[
\omega_x(u,v)=g(B_xu,v), \quad u,v\in T_xX. 
\]
The operator $B_x^*B_x : T_xX\to T_xX$ is a positive self-adjoint operator. Then $\omega$ is uniformly non-degenerate on $X$ if
\begin{equation}\label{e:uniform-positive}
\frac{(B_x^*B_xu,u)}{|u|^2} \geq \mu^2_0>0, \quad x\in X, \quad u\in T_xX\setminus \{0\}.
\end{equation} 
Finally, $(X,\omega)$ is quantizable, if there exists a Hermitian line bundle $(L,h^L)$ on $X$ with Hermitian connection $\nabla^L$ such that that the curvature $R^L=(\nabla^L)^2$ of the connection $\nabla^L$ satisfies the prequantization condition:
\begin{equation}\label{e1.1}
iR^L=\omega.
\end{equation}

We consider an auxiliary Hermitian vector bundle $(E, h^E)$ on $X$ with Hermitian connection $\nabla^E : {C}^\infty(X,E)\to {C}^\infty(X,T^*X\otimes E)$. We assume that it has bounded geometry, which means that the curvature $R^E$ of the connection $\nabla^E$ and its derivatives of any order are uniformly bounded on $X$ in the norm induced by $g$ and $h^E$. Observe that, by above assumptions, the bundle $L$ has bounded geometry.


Let $\nabla^{L^p\otimes E}: {C}^\infty(X,L^p\otimes E)\to
{C}^\infty(X, T^*X \otimes L^p\otimes E)$ be the connection
on $L^p\otimes E$ induced by $\nabla^{L}$ and $\nabla^E$.
Denote by $\Delta^{L^p\otimes E}$ the induced
Bochner-Laplacian acting on ${C}^\infty(X,L^p\otimes E)$ by
\begin{equation}\label{e1.4}
\Delta^{L^p\otimes E}=\big(\nabla^{L^p\otimes E}\big)^{\!*}\,
\nabla^{L^p\otimes E},
\end{equation}
where $\big(\nabla^{L^p\otimes E}\big)^{\!*}:
{C}^\infty(X,T^*X\otimes L^p\otimes E)\to
{C}^\infty(X,L^p\otimes E)$ denotes the formal adjoint of 
the operator $\nabla^{L^p\otimes E}$.  
The renormalized Bochner-Laplacian is a differential
operator $\Delta_p$ acting on ${C}^\infty(X,L^p\otimes E)$ by
 \begin{equation}\label{e:Delta_p}
\Delta_p=\Delta^{L^p\otimes E}-p\tau,
  \end{equation}
where $\tau\in {C}^\infty_b(X)$ is given by
 \begin{equation}\label{tau}
 \tau(x)=\frac 12\operatorname{Tr}(B_x^*B_x)^{1/2},\quad x\in X.
 \end{equation}
The renormalized Bochner-Laplacian was introduced by
Guillemin and Uribe in \cite{Gu-Uribe}.
When $(X,\omega,g)$ is a K\"{a}hler manifold, it is twice the
corresponding Kodaira-Laplacian 
$\Box^{L^p\otimes E}=\bar\partial^{L^p\otimes E*}\bar\partial^{L^p\otimes E}$.
 
Since $(X,g)$ is complete, the renormalized Bochner Laplacian $\Delta_p$ is essentially self-adjoint, see \cite[Theorem 2.4]{Kor-ma-ma}. 
We still denote by $\Delta_p$ the unique self-adjoint extension of 
$\Delta_p$ acting on the space ${C}^\infty_c(X,L^p\otimes E)$ of compactly supported smooth sections of $L^p\otimes E$. By \cite[Theorem 1.1]{Kor-ma-ma} (see also \cite[Lemma 1]{ma-ma15}),  there exists $C_L>0$ such that for any $p\in \mathbb N$
the spectrum $\sigma(\Delta_p)$ of $\Delta_p$ in $L^2(X,L^p\otimes E)$ satisfies
 \begin{equation}\label{e1.8}
 \sigma(\Delta_p)\subset \big[\!-C_L,C_L\big]\cup
 \big[2p\mu_0-C_L,+\infty\big).
 \end{equation}

Let $\mathcal H_p\subset L^2(X,L^p\otimes E)$ be the spectral subspace 
of $\Delta_p$ corresponding to $[-C_L,C_L]$ and $P_{\mathcal H_p}: L^2(X,L^p\otimes E)\to \mathcal H_p$ be the corresponding spectral projection of $\Delta_p$ (the generalized Bergman projection).
If $X$ is compact, the spectrum of $\Delta_p$ is discrete
and $\mathcal H_p$ is the subspace spanned by the eigensections of $\Delta_p$ 
corresponding to eigenvalues in $[-C_L,C_L]$. 
The space $\mathcal H_p$ will be the quantum space of our quantization. The idea to deal with such a quantum space was suggested by Borthwick and Uribe in \cite{B-Uribe}.

%
%

Let $F$ be a Hermitian vector bundle on $X$ equipped with a Hermitian connection $\nabla^{F}$. 
The Levi-Civita connection $\nabla^{TX}$ on $(X,g)$ and 
the connection $\nabla^{F}$ define a Hermitian connection
$\nabla^{F} : {C}^\infty(X, (T^*X)^{\otimes j}
\otimes F)\to {C}^\infty(X, (T^*X)^{\otimes (j+1)}
\otimes F)$ on each vector bundle $(T^*X)^{\otimes j} \otimes F$ for 
$j\in \mathbb N$, that allows us to introduce the operator 
$\big(\nabla^{F}\big)^{\!\ell} : 
{C}^\infty(X, F)
\to {C}^\infty(X, (T^*X)^{\otimes \ell} \otimes F)$
for every $\ell\in \mathbb N$.  If $F$ has bounded geometry, we denote by 
${C}^k_b(X, F)$ the space of sections 
$u\in {C}^k(X,F)$ such that
\[
\|u\|_{{C}^k_b}=\sup_{x\in X, \ell\leq k}
\Big|\big(\nabla^{F}\big)^{\!\ell} u(x)\Big| <\infty,
\]
where $|\cdot|_x$ is the norm in
$(T^*_xX)^{\otimes \ell} \otimes F_x$ 
defined by $g$ and $h^F$.

For any $f\in C^\infty_b(X,\operatorname{End}(E))$, the multiplication operator by $f$ yields a bounded operator in $L^2(X,L^p\otimes E)$, which will be also denoted by $f$.  We define the Berezin-Toeplitz quantization of $f$ to be a sequence of linear operators $T_{f,p}: L^2(X,L^p\otimes E)\to L^2(X,L^p\otimes E), p\in \mathbb N,$ given by  
\[
T_{f,p}=P_{\mathcal H_p}fP_{\mathcal H_p}.
\]

\begin{thm}\label{t:comm}
If $f,g\in C^\infty_b(X,\operatorname{End}(E))$, then, for the product of the Toeplitz operators $\{T_{f,p}\}$ and $\{T_{g,p}\}$, we have
\begin{equation}\label{e:TfpTgp-prod}
T_{f,p}T_{g,p}=T_{fg,p}+\mathcal O(p^{-1}), \quad p\to +\infty. 
\end{equation}
Moreover, if $f,g\in C^\infty_b(X)$, then, for the commutator of the operators $\{T_{f,p}\}$ and $\{T_{g,p}\}$, we have
\begin{equation}\label{e:TfpTgp-comm}
[T_{f,p}, T_{g,p}]=i p^{-1}T_{\{f,g\},p}+ \mathcal O(p^{-2}), \quad p\to +\infty, 
\end{equation}
where $\{f,g\}$ is the Poisson bracket on $(X,\omega)$.
\end{thm}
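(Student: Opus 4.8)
The plan is to reduce the statement to a local computation by establishing an off-diagonal decay estimate for the Schwartz kernel of $P_{\mathcal H_p}$, and then to import the asymptotic expansion of this kernel from the theory developed by Ma--Marinescu on compact manifolds, adapted to the bounded geometry setting. Concretely, I would first prove that the generalized Bergman projection $P_{\mathcal H_p}$ has a kernel $P_{\mathcal H_p}(x,y)$ with Gaussian-type decay in $\sqrt{p}\,d(x,y)$, with all constants uniform in $X$; this follows from finite propagation speed for the wave operator associated with $\Delta_p$ (available because $(X,g)$ is complete) together with the spectral gap \eqref{e1.8}, exactly as in \cite{Kor-ma-ma}. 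This localization is what makes a ``compact-like'' argument legitimate: it shows that the composition $P_{\mathcal H_p}fP_{\mathcal H_p}$ is, modulo $\mathcal O(p^{-\infty})$, a sum of contributions from small geodesic balls, on each of which one can work in a trivialization and compare with the model operator on $\mathbb R^{2n}$.

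Next I would set up the near-diagonal asymptotic expansion of $P_{\mathcal H_p}(x,y)$ in normal coordinates centered at a point $x_0$, rescaled by $\sqrt{p}$. The renormalized Bochner Laplacian $\Delta_p$ rescales, as $p\to\infty$, to a harmonic-oscillator-type operator $\mathcal L_0$ on $\mathbb R^{2n}$ whose kernel of the spectral projection to its bottom is the model Bergman kernel $\mathcal P(Z,Z')$; the coefficients of the expansion $P_{\mathcal H_p}(x_0,x_0)\sim p^n\sum_{r\ge 0}b_r(x_0)p^{-r}$ are universal polynomials in the curvature data, with $b_0=(\det(B_{x_0}^*B_{x_0})^{1/2})/(2\pi)^n$ times the identity on $E_{x_0}$. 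The crucial point in the bounded geometry setting is that every step — the rescaling, the resolvent/contour estimates, the Sobolev estimates used to control the remainder — is uniform in $x_0\in X$, which is precisely where assumptions (i)--(iii) and the boundedness of $R^E,R^L$ enter. I would then express $T_{f,p}(x,y)=\int P_{\mathcal H_p}(x,z)f(z)P_{\mathcal H_p}(z,y)\,dv(z)$ and, using the localization and the near-diagonal expansion, obtain a full asymptotic expansion $T_{f,p}(x,y)\sim p^n\sum_{r\ge 0}(b_r(f))(x,y)p^{-r}$ in the same sense, with leading term governed by $f(x_0)\mathcal P$ on the model space.

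With these expansions in hand, \eqref{e:TfpTgp-prod} becomes a matter of composing two such expansions: by the localization, $(T_{f,p}T_{g,p})(x,y)$ and $(T_{fg,p})(x,y)$ agree to leading order because the model Bergman projection $\mathcal P$ is an idempotent, $\mathcal P f(x_0)\mathcal P$ reproduces $f(x_0)\mathcal P$ at the point $x_0$, and hence $(T_{f,p}T_{g,p})(x_0,x_0)=p^n f(x_0)g(x_0)b_0(x_0)+\mathcal O(p^{n-1})$, which matches $T_{fg,p}$; controlling the $\mathcal O(p^{-1})$ remainder as an \emph{operator norm} bound (not just a pointwise kernel bound) requires combining the Gaussian off-diagonal decay with a Schur-test / Young-inequality argument, uniform in $X$. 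For the commutator bound \eqref{e:TfpTgp-comm} one pushes the expansion one order further: the $p^{-1}$ coefficient of $T_{f,p}T_{g,p}-T_{fg,p}$ is computed from the subleading term of the model composition and involves the first-order Taylor data of $f$ and $g$; antisymmetrizing in $f\leftrightarrow g$, the symmetric second-derivative contributions cancel and one is left with a first-order bidifferential expression which, by the defining relation \eqref{e1.1} between $R^L$ and $\omega$, is exactly $i$ times the Poisson bracket $\{f,g\}$, so that $[T_{f,p},T_{g,p}]=ip^{-1}T_{\{f,g\},p}+\mathcal O(p^{-2})$. I expect the main obstacle to be the uniformity of the near-diagonal expansion and its remainder estimates over the noncompact manifold: on a compact manifold one quietly uses compactness to get uniform constants in the Sobolev and resolvent estimates, whereas here one must instead invoke the bounded geometry hypotheses systematically — uniform control of normal coordinate charts, of the metric and its derivatives, and of the curvatures $R^{TX}, R^L, R^E$ — to ensure that the implied constants in $\mathcal O(p^{-1})$ and $\mathcal O(p^{-2})$ do not degenerate as the base point wanders to infinity.
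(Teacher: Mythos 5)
Your proposal is correct and follows essentially the same route as the paper: uniform off-diagonal decay plus the rescaled near-diagonal expansion of the generalized Bergman kernel from \cite{Kor-ma-ma}, the local model computation of Ma--Marinescu identifying $C_0(f,g)=fg$ and the antisymmetrized $p^{-1}$ coefficient with $i\{f,g\}$, and a Schur-test argument (re-centering at each point so the polynomial weight $(1+\sqrt{p}|Z|+\sqrt{p}|Z'|)^M$ is absorbed by the Gaussian in $\sqrt{p}|Z-Z'|$) to pass from kernel to operator-norm bounds. The only organizational difference is that the paper packages these ingredients into a characterization of Toeplitz operators via their Schwartz kernels (Theorem~\ref{t:charact}) and the algebra statement Theorem~\ref{t:algebra}, with the remainders controlled in the weighted spaces $L^2_{\alpha,y}$ so the calculus can be iterated to all orders, whereas your direct composition of two kernel expansions suffices for the two-term statement of Theorem~\ref{t:comm}.
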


The limit $p\to +\infty$ for Toeplitz operators can be thought of as a semiclassical limit, with semiclassical parameter $\hbar=\frac{1}{p}\to 0$. Thus, we show that this quantization for the symplectic manifold $(X,\omega)$ has the correct semiclassical limit. In the particular case when $(X,\omega)$ is a K\"ahler manifold of bounded geometry with a complex structure $J$ and a Riemannian metric $g$ induced by $\omega$ and $J$, and $L$ and $E$ are holomorphic vector bundles, we get an extension of the  K\"ahler quantization constructed by Bordemann, Meinrenken and Schlichenmaier  \cite{BMS94} to this class of noncompact manifolds. 

In the case when $X$ is a compact symplectic manifold, the Berezin-Toeplitz quantization with the quantum space $\mathcal H_p$ was constructed \cite{B-Uribe} in the almost-K\"ahler case and in \cite{ioos-lu-ma-ma,Kor18} for an arbitrary Riemannian metric. We refer to \cite{charles,HM,ma-ma08a} for different approaches 

For the simplest example of a non-compact symplectic manifold, the complex $n$-dimensional vector space $\mathbb C^n$ with the standard symplectic form and Euclidean metric, the Berezin-Toeplitz quantization  is given by the Toeplitz operators on the Fock space. It was discussed already by Berezin in  \cite{Berezin74,Berezin75} and further analyzed in \cite{Bauer-Coburn16,B98,Coburn92} (see also the references therein). In particular, in \cite{Coburn92}, the Berezin-Toeplitz quantization was constructed for functions from $C^{4n+6}_b(\mathbb C^n)$. The role of uniform continuity and boundednes of derivatives was discussed in \cite{Bauer-Coburn16}. In particular, in \cite{Bauer-Coburn16}, the authors provide an example which indicates that the above theorem in general fails in the case of rapidly oscillating bounded functions. 

The theory of Berezin-Toeplitz quantization was studied for various types of domains in $\mathbb C^n$ (see, for instance, \cite{bauer-coburn-hagger18,bauer-hagger-vas17,BLU93,Englis02,Klimek-Lesniewskii92} and references therein). In \cite{Englis-Upmeier15}, Berezin-Toeplitz quantization was constructed in the general setting of symmetric spaces of compact or noncompact type, both in the real and the complex hermitian case (see also the references in this paper). In \cite[Section 5]{ma-ma08a}, Ma and Marinescu considered a class of complete Hermitian manifolds and constructed a Berezin-Toeplitz quantization of the algebra $C^\infty_{const}(X)$ of smooth functions on $X$ which are constant outside of a compact set. To our knowledge, there are no works on Berezin-Toeplitz quantization for a rather general class of noncompact symplectic manifolds.

In this paper, we follow the approach to Berezin-Toeplitz quantization suggested by Ma and Marinescu \cite{ma-ma:book,ma-ma08} in the case when the quantum space is the kernel of the spin$^c$ Dirac operator (also with an auxiliary bundle). It is based on the Bergman kernel expansion from \cite{dai-liu-ma}. Combined with asymptotic expansions for generalized Bergman kernels from \cite{ma-ma:book,ma-ma08} and weighted estimates, this approach was extended to the case when the quantum space is $\mathcal H_p$ in \cite{Kor18} (see also \cite{ioos-lu-ma-ma} for a slightly different approach). Asymptotic expansions of generalized Bergman kernels on manifolds of bounded geometry are proved in \cite{Kor-ma-ma} (see also \cite{Kor20}). The main contribution of this paper is an adaption of the Toeplitz operator calculus of \cite{ioos-lu-ma-ma,Kor18,ma-ma:book,ma-ma08} to the noncompact setting. For this purpose, we use the techniques of weighted estimates developed in \cite{Kor18,Kor20,Kor-ma-ma}. 

The paper is organized as follows. In Section~\ref{s:Toeplitz}, we describe basic facts of the Toeplitz operator calculus in the current setting, namely, a construction of the algebra of Toeplitz operators and a criterion for Toeplitz operators in terms of their Schwartz kernels. Theorem~\ref{t:comm} follows immediately from these results. Section~\ref{s:proof} is devoted to the proof of the criterion. 

\section{Toeplitz operator calculus}\label{s:Toeplitz}
\subsection{Algebra of Toeplitz operators}
In this section, we describe basic facts of the Toeplitz operator calculus in the current setting.
First, we introduce some weighted $L^2$-spaces. 

As usual, we denote by $L^2(X,L^p\otimes E)$ the space of $L^2$-sections of $L^p\otimes E$ with the $L^2$-norm given by
\begin{equation}\label{e2.5}
\|u\|^2=\int_{X}|u(x)|^2dv_{X}(x), \quad u\in L^2(X,L^p\otimes E),
\end{equation}
where $dv_X$ stands for the Riemannian volume form  of $(X,g)$.

Let $d(x,y)$ be the geodesic distance on $X$. 
We introduce a family $\{d_{y} : y\in X\}$  of Lipschitz functions on $X$ given by
\begin{equation}\label{e:dy}
d_{y}(x) =d(x,y), \quad  x\in X,
\end{equation}
and a family $L^2_{\alpha,y}(X,L^p\otimes E)$, $\alpha\in \mathbb R$, $y\in X$, of weighted $L^2$-spaces as 
\[
L^2_{\alpha,y}(X,L^p\otimes E)=\{u\in C^{-\infty}(X,L^p\otimes E) : e^{\alpha d_y}u\in L^2(X,L^p\otimes E)\}
\]
with the Hilbert norm 
\[
\|u\|_{p,\alpha,y}=\|e^{\alpha d_y}u\|, \quad u\in L^2_{\alpha,y}(X,L^p\otimes E).
\]
We note that, as a topological vector space, $L^2_{\alpha,y}(X,L^p\otimes E)$ is independent of $y$. 

The following definition is an extension of \cite[Definition 4.1]{ma-ma08} 
in the current setting.

\begin{defn}\label{d:Toeplitz}
A Toeplitz operator is a sequence of bounded linear operators $T_p : L^2(X,L^p\otimes E)\to L^2(X,L^p\otimes E)$, $p\in \mathbb N$, satisfying the following conditions:
\begin{description}
\item[(i)] For any $p\in \mathbb N$, we have 
\[
T_p=P_{\mathcal H_p}T_pP_{\mathcal H_p}. 
\]
\item[(ii)] There exists a sequence $g_l\in C^\infty_b(X,\operatorname{End}(E))$ such that 
\[
T_p=P_{\mathcal H_p}\left(\sum_{l=0}^\infty p^{-l}g_l\right)P_{\mathcal H_p}
\]
in the following sense:  for any $K\in \mathbb Z_+$ there exist $\mu>0$ and $C>0$ such that for any $p\in \mathbb N$, $\alpha\in \mathbb R$ with $|\alpha|<\mu\sqrt{p}$ and $y\in X$, 
\[
\left\|T_p-P_{\mathcal H_p}\left(\sum_{l=0}^K p^{-l}g_l\right)P_{\mathcal H_p}: L^2_{\alpha,y}(X,L^p\otimes E) \to L^2_{\alpha,y}(X,L^p\otimes E) \right\|\leq Cp^{-K-1}.
\]
\end{description}
  \end{defn}
  
The following theorem states an asymptotic expansion of the composition of Toeplitz operators $T_{f,p}$ and $T_{g,p}$. It is an extension of \cite[Theorem 1.1]{ma-ma08} 
in the current setting.

\begin{thm}\label{t:algebra}
Let $f,g\in C^\infty_b(X,\operatorname{End}(E))$. Then the composition of the Toeplitz operators $T_{f,p}$ and $T_{g,p}$ is a Toeplitz operator in the sense of Definition~\ref{d:Toeplitz}. More precisely, it admits the asymptotic expansion 
\[
T_{f,p}T_{g,p}=\sum_{r=0}^\infty p^{-r}T_{C_r(f,g),p}, 
\]
with some $C_r(f,g)\in C^\infty_b(X,\operatorname{End}(E))$, where $C_r$ are bidifferential operators. In particular, $C_0(f,g)=fg$ and, for $f,g\in C^\infty_b(X)$,
\[
C_1(f,g)-C_1(g,f)=i\{f,g\}.
\] 
\end{thm}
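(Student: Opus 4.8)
The plan is to reduce Theorem~\ref{t:algebra} to the Bergman kernel asymptotics of \cite{Kor-ma-ma} and the weighted-estimate machinery of \cite{Kor18,Kor20}, following the compact-case scheme of \cite{ma-ma08,ma-ma:book} but tracking uniformity in $y\in X$ throughout. First I would establish the basic building blocks: that for $f\in C^\infty_b(X,\operatorname{End}(E))$ the operator $T_{f,p}=P_{\mathcal H_p}fP_{\mathcal H_p}$ is bounded on each weighted space $L^2_{\alpha,y}$ uniformly in $y$ and in $\alpha$ with $|\alpha|<\mu\sqrt p$, and that the generalized Bergman projection $P_{\mathcal H_p}$ has a Schwartz kernel $P_{\mathcal H_p}(x,x')$ admitting the off-diagonal Gaussian decay estimate $|P_{\mathcal H_p}(x,x')|\le Cp^n e^{-c\sqrt p\, d(x,x')}$ together with a full asymptotic expansion in normal coordinates, with all constants uniform over $X$ — this is exactly where bounded geometry enters, guaranteeing the geometric data and the heat/resolvent parametrices are uniformly controlled. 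These facts should either be quoted from the cited papers or assembled from them with minor bookkeeping.

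The heart of the argument is the same as in \cite[Section 4]{ma-ma08}: one studies the Schwartz kernel of $P_{\mathcal H_p}fP_{\mathcal H_p}$, which is $\int_X P_{\mathcal H_p}(x,z)f(z)P_{\mathcal H_p}(z,x')\,dv_X(z)$, and applies stationary phase / Taylor expansion of $f$ around the near-diagonal point $z\approx x\approx x'$ using the model (Bargmann-Fock) computation to get, order by order in $p^{-1}$, that this kernel is again of ``Toeplitz type'' with symbol $\sum p^{-l}g_l$. Composing two such operators, one sees that the bidifferential operators $C_r(f,g)$ arise from pairing Taylor coefficients of $f$ and $g$ against moments of the model Bergman kernel; in particular the leading term gives $C_0(f,g)=fg$ and the subleading term, after the model computation on $\CC^n$ with the Gaussian weight, produces $C_1(f,g)$ whose antisymmetric part is $i\{f,g\}$ — this last identification is a local computation identical to the compact case, since the Poisson bracket and the model kernel are determined by $\omega$ and $g$ at a point. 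The new ingredient is that every estimate in this expansion must be carried out in the norm of $L^2_{\alpha,y}$ and shown to hold with constants independent of $y$; here one uses that $d_y$ is $1$-Lipschitz, so conjugation by $e^{\alpha d_y}$ interacts with the Gaussian off-diagonal decay of the kernels in a controlled way (as in \cite{Kor18,Kor-ma-ma}), absorbing $\alpha$ as long as $|\alpha|<\mu\sqrt p$.

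The main obstacle I expect is precisely this uniformity: in the compact case one freely uses compactness to pass from pointwise near-diagonal expansions of kernels to operator-norm estimates and to control remainders, whereas here each remainder term in the expansion of $T_{f,p}T_{g,p}-\sum_{r=0}^K p^{-r}T_{C_r(f,g),p}$ must be bounded in $L^2_{\alpha,y}\to L^2_{\alpha,y}$ by $Cp^{-K-1}$ with $C$ and the threshold $\mu$ independent of $y$. The plan is to handle this by combining (a) the uniform-in-$X$ Bergman kernel expansion from \cite{Kor-ma-ma}, (b) Schur-test type bounds for the weighted operator norms using the Gaussian decay of all kernels involved, with $\mu$ chosen smaller than the decay rate so that $e^{\alpha d_y}$ can be absorbed, and (c) the spectral gap \eqref{e1.8}, which localizes $\mathcal H_p$ and controls $P_{\mathcal H_p}$ via functional calculus applied to $\Delta_p$ with uniform constants. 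Once the weighted remainder estimates are in place, the identification of $C_r$ as bidifferential operators and the explicit formulas for $C_0$ and $C_1$ follow from the same model computations as in \cite{ma-ma08,ioos-lu-ma-ma,Kor18}, now valid pointwise and hence uniformly.
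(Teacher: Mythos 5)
Your proposal is correct and follows essentially the same route as the paper: the paper packages your key step (kernel of the composition is again of ``Toeplitz type,'' with symbols extracted inductively and remainders controlled in $L^2_{\alpha,y}$ uniformly in $y$ via Schur-test bounds against the Gaussian off-diagonal decay) as an explicit kernel characterization of Toeplitz operators, Theorem~\ref{t:charact}, whose sufficiency direction is proved exactly by the weighted Schur argument and induction you describe, after which Theorem~\ref{t:algebra} follows as in the compact case. The only cosmetic difference is that the paper's near-diagonal expansions proceed in powers of $p^{-1/2}$ with a parity condition on the polynomial coefficients, which is what yields an expansion of the operator itself in integer powers of $p^{-1}$.
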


It is clear that Theorem~\ref{t:comm} is an immediate consequence of this theorem. Theorem~\ref{t:algebra} also shows that the set of Toeplitz operators is an algebra. 

The key ingredient in the proof of Theorem~\ref{t:algebra} is a characterization of Toeplitz operators in terms of asymptotic expansions of their Schwartz kernels, which is stated in the next section. For compact manifolds, this type of characterization was introduced in \cite[Theorem 4.9]{ma-ma08} for Toeplitz operators associated with spin$^c$ Dirac quantization and extended to Toeplitz operators associated with the renormalized Bochner Laplacian in \cite[Theorem 6.5]{Kor18} (see also \cite[Theorem 4.1]{ioos-lu-ma-ma}).

\subsection{Criterion for Toeplitz operators}\label{s:charact}
We will use the normal coordinates near an arbitrary point $x_0\in X$. We denote by $B^{X}(x_0,r)$ and $B^{T_{x_0}X}(0,r)$ the open balls in $X$ and $T_{x_0}X$ with center $x_0$ and radius $r$, respectively. We identify $B^{T_{x_0}X}(0,r_X)$ with $B^{X}(x_0,r_X)$ via the exponential map $\exp^X_{x_0}: T_{x_0}X\to X$. Furthermore, we choose trivializations of the bundles $L$ and $E$ over $B^{X}(x_0,r_X)$,   identifying their fibers $L_Z$ and $E_Z$ at $Z\in B^{T_{x_0}X}(0,r_X)\cong B^{X}(x_0,r_X)$ with the spaces  $L_{x_0}$ and $E_{x_0}$ by parallel transport with respect to the connections $\nabla^L$ and $\nabla^E$ along the curve $\gamma_Z : [0,1]\ni u \to \exp^X_{x_0}(uZ)$.

Let $dv_{T_{x_0}X}$ be the Riemannian volume form of the tangent space $T_{x_0}X$ equipped with the Riemannian metric $g^{T_{x_0}X}$ defined by $g$. Let $\kappa_{x_0}$ be the smooth positive function on $B^{T_{x_0}X}(0,r_X)\cong B^{X}(x_0,r_X)$ defined by the equation
\begin{equation}\label{e:kappa}
dv_{X}(Z)=\kappa_{x_0}(Z)dv_{T_{x_0}X}(Z), \quad Z\in B^{T_{x_0}X}(0,r_X).
\end{equation}

Let $\mathcal P_{x_0}\in C^\infty(T_{x_0}X\times T_{x_0}X)$ be the Bergman kernel in $T_{x_0}X$ (see \cite{ma-ma:book,ma-ma08}). If we choose an orthonormal base $\{e_j : j=1,\ldots,2n\}$ in $T_{x_0}X$ such that  
\[
B_{x_0}e_{2k-1}=a_ke_{2k}, \quad B_{x_0}e_{2k}=-a_ke_{2k-1},\quad k=1,\ldots,n,
\]
then $\mathcal P_{x_0}$ is given by
\begin{multline}
\label{e:Bergman}
\mathcal P_{x_0}(Z,Z^\prime)=\frac{1}{(2\pi)^n}\prod_{j=1}^na_j \exp\left(-\frac 14\sum_{k=1}^na_k(|z_k|^2+|z_k^\prime|^2- 2z_k\bar z_k^\prime) \right), \\ Z,Z^\prime\in \RR^{2n} \cong T_{x_0}X,
\end{multline}
where we use the complex coordinates $z_k=Z_{2k-1}+iZ_{2k}, k=1,\ldots,n$. 

Consider a sequence of linear operators $\Xi_p : L^2(X,L^p\otimes E)\to L^2(X,L^p\otimes E)$ with smooth Schwartz kernels. Denote by $\pi_1$ and $\pi_2$ the projections of $X\times X$ on the first and second factor. The Schwartz kernel of $\Xi_p$ with respect to the Riemannian volume form $dv_{X}$ is a smooth section $\Xi_p(\cdot,\cdot)\in {C}^\infty(X\times X, \pi_1^*(L^p\otimes E)\otimes \pi_2^*(L^p\otimes E)^*)$. Let $TX\times_X TX=\{(Z,Z^\prime)\in T_{x_0}X\times T_{x_0}X : x_0\in X\}$ be the fiberwise product  and  $\pi : TX\times_X TX\to X$ be the natural projection given by $\pi(Z,Z^\prime)=x_0$. The kernel $\Xi_p(x,x^\prime)$ induces a smooth section $\Xi_{p,x_0}(Z,Z^\prime)$ of the vector bundle $\pi^*(\operatorname{End}(E))$ on $TX\times_X TX$ defined for all $x_0\in X$ and $Z,Z^\prime\in T_{x_0}X$ with $|Z|, |Z^\prime|<r_X$.

\begin{defn}[\cite{ma-ma:book,ma-ma08}]\label{d:equiv}
We say that 
\[
p^{-n}\Xi_{p,x_0}(Z,Z^\prime)\cong \sum_{r=0}^k(Q_{r,x_0}\mathcal P_{x_0})(\sqrt{p}Z,\sqrt{p}Z^\prime)p^{-\frac{r}{2}}+\mathcal O(p^{-\frac{k+1}{2}})
\]
with some $Q_{r,x_0}\in \operatorname{End}(E_{x_0})[Z,Z^\prime]$, $0\leq r\leq k$,  depending smoothly and $C^\infty$-boundedly on $x_0\in X$, if there exist $\varepsilon^\prime\in (0,r_X]$, $C_0>0$ and $c_0>0$ with the following property: for any $l\in \mathbb N$, there exist $C>0$ and $M>0$ such that for any $x_0\in X$, $p\geq 1$ and $Z,Z^\prime\in T_{x_0}X$, $|Z|, |Z^\prime|<\varepsilon^\prime$, we have 
\begin{multline}\label{e:equiv}
\Bigg|p^{-n}\Xi_{p,x_0}(Z,Z^\prime)\kappa_{x_0}^{\frac 12}(Z)\kappa_{x_0}^{\frac 12}(Z^\prime) -\sum_{r=0}^k(Q_{r,x_0}\mathcal P_{x_0})(\sqrt{p} Z, \sqrt{p}Z^\prime)p^{-\frac{r}{2}}\Bigg|_{C^{l}_b(X)}\\ 
\leq Cp^{-\frac{k+1}{2}}(1+\sqrt{p}|Z|+\sqrt{p}|Z^\prime|)^Me^{-C_0\sqrt{p}|Z-Z^\prime|}+\mathcal O(e^{-c_0\sqrt{p}}).
\end{multline}
\end{defn}

Here $C^{l}_b(X)$ is the $C^{l}_b$-norm for the parameter $x_0\in X$. 

The following theorem gives a characterization of Toeplitz operators in terms of asymptotic expansions of their Schwartz kernels,

\begin{thm}\label{t:charact}
A family $\{T_p: L^2(X,L^p\otimes E)\to L^2(X,L^p\otimes E)\}$ of bounded linear operators is a Toeplitz operator in the sense of Definition \ref{d:Toeplitz} if and only if it satisfies the following three conditions:
\begin{description}
\item[(i)] For any $p\in \mathbb N$, 
\[
T_p=P_{\mathcal H_p}T_pP_{\mathcal H_p}. 
\]
\item[(ii)] $T_p$ has smooth Schwartz kernel $T_{p}(x, x^\prime)$ with respect to $dv_X$, and there exists $\mu>0$ such that for any $\epsilon_0>0$, we have
\[
\big|T_{p}(x, x^\prime)\big|\leq C e^{-\mu \sqrt{p} \,d(x, x^\prime)},\quad  p\in \mathbb N, \quad x, x^\prime \in X, \quad d(x, x^\prime) >\epsilon_0,
\]
where $C>0$ depends only on $\epsilon_0$.

\item[(iii)] There exists a family of polynomials $\mathcal Q_{r,x_0}\in \operatorname{End}(E_{x_0})[Z,Z^\prime]$, depending smoothly and $C^\infty$-boundedly on $x_0$, of the same parity as $r\in \ZZ_+$ and $\varepsilon^\prime\in (0,r_X/4)$ such that, for any $k\in \mathbb N$, $x_0\in X$, $Z,Z^\prime\in T_{x_0}X$, $|Z|, |Z^\prime|<\varepsilon^\prime$,  
\[
p^{-n}T_{p,x_0}(Z,Z^\prime)\cong \sum_{r=0}^k(\mathcal Q_{r,x_0}\mathcal P_{x_0})(\sqrt{p}Z,\sqrt{p}Z^\prime)p^{-\frac{r}{2}}+\mathcal O(p^{-\frac{k+1}{2}}).
\] 
\end{description}
\end{thm}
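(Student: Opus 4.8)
The plan is to prove Theorem~\ref{t:charact} by following the strategy of \cite[Theorem 4.9]{ma-ma08} and \cite[Theorem 6.5]{Kor18}, carefully tracking uniformity in the base point $x_0$ via the weighted $L^2$-estimates of \cite{Kor18,Kor20,Kor-ma-ma}. The backbone is the near-diagonal asymptotic expansion of the generalized Bergman kernel $P_{\mathcal H_p}(x,x^\prime)$ on manifolds of bounded geometry from \cite{Kor-ma-ma,Kor20}: namely $p^{-n}P_{\mathcal H_p,x_0}(Z,Z^\prime)\cong\sum_{r\geq 0}(J_{r,x_0}\mathcal P_{x_0})(\sqrt p Z,\sqrt p Z^\prime)p^{-r/2}$ with $J_{0,x_0}=\mathrm{Id}_{E_{x_0}}$, together with the off-diagonal decay estimate $|P_{\mathcal H_p}(x,x^\prime)|\le C e^{-\mu\sqrt p\,d(x,x^\prime)}$ for $d(x,x^\prime)>\epsilon_0$. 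I would also record at the outset the operator-norm version of these facts on the weighted spaces $L^2_{\alpha,y}$: $P_{\mathcal H_p}$ and operators of the form $P_{\mathcal H_p}gP_{\mathcal H_p}$ with $g\in C^\infty_b$ are bounded uniformly in $y$ and in $|\alpha|<\mu\sqrt p$.

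For the ``only if'' direction, assume $T_p=P_{\mathcal H_p}(\sum_{l=0}^\infty p^{-l}g_l)P_{\mathcal H_p}$. Condition~(i) is immediate. For (ii), write $T_p=P_{\mathcal H_p}g_{\le K}P_{\mathcal H_p}+R_{K,p}$ where $g_{\le K}=\sum_{l=0}^K p^{-l}g_l$ and $\|R_{K,p}\|_{L^2_{\alpha,y}\to L^2_{\alpha,y}}\le Cp^{-K-1}$; the Schwartz kernel of $P_{\mathcal H_p}g_{\le K}P_{\mathcal H_p}$ satisfies the Gaussian-type off-diagonal bound because it is an integral of the Bergman kernel against itself (use the semigroup/composition property of the exponential weights and the off-diagonal decay of $P_{\mathcal H_p}$), while the remainder's kernel is controlled pointwise by its weighted operator norm via the standard trick of testing against bump sections and using elliptic estimates for $\Delta_p$ uniform on bounded geometry. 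For (iii), insert the near-diagonal expansion of $P_{\mathcal H_p}$ on both sides of $g_l$, multiply the resulting Gaussian kernels in the fiber coordinates of $TX\times_X TX$ — the product $(Q\mathcal P_{x_0})\ast(Q^\prime\mathcal P_{x_0})$ of polynomial-times-Bergman kernels is again of that form by an explicit Gaussian convolution in $\mathbb C^n$ — Taylor-expand $g_l$ and $\kappa_{x_0}$ near $x_0$, and collect powers of $p^{-1/2}$; the parity statement follows because $J_{r,x_0}$ has parity $r$ and each Taylor coefficient carries the matching power of $Z$.

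For the ``if'' direction, which is the substance, I would construct the $g_l$ recursively. Given $\{T_p\}$ satisfying (i)--(iii), the leading term $\mathcal Q_{0,x_0}$ is an element of $\mathrm{End}(E_{x_0})$ (degree zero by parity); set $g_0(x_0)=\mathcal Q_{0,x_0}$, which lies in $C^\infty_b(X,\mathrm{End}(E))$ by the smooth-and-bounded dependence hypothesis. One then shows $T_p-T_{g_0,p}$ again satisfies (i)--(iii) but with expansion starting at order $p^{-1}$: this requires (a) showing $T_{g_0,p}=P_{\mathcal H_p}g_0P_{\mathcal H_p}$ has kernel expansion with leading term exactly $(\mathcal Q_{0,x_0}\mathcal P_{x_0})(\sqrt p Z,\sqrt p Z^\prime)$, i.e. the $r=0$ terms cancel, and (b) a crucial \emph{vanishing lemma}: if a family satisfying (i)--(ii) has kernel expansion starting at order $p^{-(k+1)/2}$ and moreover the $p^{-(k+1)/2}$ coefficient polynomial $\mathcal Q_{k+1,x_0}$ vanishes, then in fact the operator is $\mathcal O(p^{-(k+2)/2})$ in weighted norm — and conversely the first nonvanishing polynomial coefficient must be of degree zero, hence defines the next $g_l$ (after a parity/order bookkeeping that pairs half-integer kernel orders with integer operator orders, exactly as in \cite{ma-ma08}). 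Iterating and taking an asymptotic (Borel) sum of the $g_l$ produces the desired expansion; the weighted-norm estimate in Definition~\ref{d:Toeplitz}(ii) for the partial sums is then extracted from the kernel estimates by the same bump-section argument as in (ii) above.

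The main obstacle is the vanishing lemma in the ``if'' direction and its uniform version: one must deduce an operator-norm bound on weighted spaces $L^2_{\alpha,y}$, uniform in $y\in X$ and in $|\alpha|<\mu\sqrt p$, from pointwise Gaussian-type kernel bounds with a gain of $p^{-1/2}$. On a compact manifold this is a Schur-test estimate, but in the bounded-geometry setting one must control the $L^2_{\alpha,y}$-to-$L^2_{\alpha,y}$ norm of an integral operator whose kernel is $O(p^{-n-(k+1)/2})(1+\sqrt p|Z|+\sqrt p|Z^\prime|)^M e^{-c\sqrt p|Z-Z^\prime|}$ near the diagonal and exponentially small off-diagonal, uniformly over all fibers; this is precisely where the weighted-estimate machinery of \cite{Kor18,Kor-ma-ma} — the triangle inequality $d_y(x)\le d_y(x^\prime)+d(x,x^\prime)$ absorbed into the Gaussian factor, combined with a Schur test in the rescaled fiber coordinates — must be invoked, and getting the constants to be genuinely $x_0$-independent (using bounded geometry of $X$, $L$ and $E$, positivity of $r_X$, and the uniform spectral gap \eqref{e1.8}) is the delicate point. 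A secondary technical nuisance is matching the two indexing conventions (the half-integer orders $p^{-r/2}$ in the kernel expansion versus the integer orders $p^{-l}$ in Definition~\ref{d:Toeplitz}), which forces the parity hypothesis in (iii) and must be threaded consistently through the recursion.
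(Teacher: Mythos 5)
Your proposal is correct and follows essentially the same route as the paper: the ``only if'' direction via uniform weighted $L^2$-operator bounds converted to pointwise kernel decay (the paper's Proposition~\ref{t:STp}) plus composition of Bergman-kernel expansions, and the ``if'' direction via the local recursion of \cite[Propositions 4.11, 4.17]{ma-ma08a} combined with a Schur test on the weighted spaces $L^2_{\alpha,y}$ uniform in $y$ and $|\alpha|<\mu\sqrt{p}$ (the paper's Proposition~\ref{p:Kp-point}), which you rightly single out as the key new point in the bounded-geometry setting. The only cosmetic difference is that no Borel summation of the $g_l$ is needed, since Definition~\ref{d:Toeplitz}(ii) only asks for remainder estimates for each finite partial sum.
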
 

The proof of Theorem~\ref{t:charact} will be given in the next section. 
Once we have proved Theorem~\ref{t:charact}, the proof of Theorem \ref{t:algebra} can be easily completed, following the arguments of \cite[Section 4.3]{ma-ma08a}, so we will omit it. 


\section{Proof of criterion for Toeplitz operators}\label{s:proof}

This section is devoted to the proof of the criterion for Toeplitz operators, Theorem~\ref{t:charact}.  In Section \ref{s:2}, we prove the necessary conditions for $\{T_p\}$ to be a Toeplitz operator (the "only if" part), and in Section \ref{s:3}, the sufficient ones (the "if" part).

\subsection{Kernels of Toeplitz operators}\label{s:2}
In this section, we assume that $\{T_p\}$ is a Toeplitz operator in the sense of Definition \ref{d:Toeplitz} and prove that it satisfies the conditions (i)--(iii) of Theorem~\ref{t:charact}. First, we need to introduce appropriate Sobolev spaces (see \cite{Kor-ma-ma} for more detail). 

For any integer $m\geq 0$, we introduce the norm $\|\cdot\|_{p,m}$ 
on ${C}^\infty_c(X,L^p\otimes E)$ by the formula
\begin{equation}\label{e2.6}
\|u\|^2_{p,m}=\sum_{\ell=0}^m \int_{X} \left|\Big(\frac{1}{\sqrt{p}}
\nabla^{L^p\otimes E}\Big)^\ell u(x)\right|^2 dv_{X}(x), 
\quad u\in H^m(X,L^p\otimes E).
\end{equation}
The completion of ${C}^\infty_c(X,L^p\otimes E)$
with respect to $\|\cdot\|_{p,m}$ is the Sobolev space 
$H^m(X,L^p\otimes E)$ of order $m$. For any integer $m<0$, 
we define the norm in the Sobolev space $H^m(X,L^p\otimes E)$ by duality.

To construct weighted Sobolev space, we use the smoothed distance function constructed in  \cite[Proposition 4.1]{Kor91} (see also \cite[Section 3.1]{Kor-ma-ma}). It is a function $\widetilde{d}_p\in C^\infty(X\times X)$, $p\in \mathbb N$, satisfying the following conditions:

(1) we have
\begin{equation}\label{(1.1)}
\vert \widetilde{d}_p(x,y) - d (x,y)\vert  < \frac{1}{\sqrt{p}}\;,
\quad x, y\in X;
\end{equation}

(2) for any $k>0$, there exists $c_k>0$ such that
\begin{equation}
\label{dist}
\left(\frac{1}{\sqrt{p}}\right)^{k-1} \left| \nabla^k_{x}
\widetilde{d}_p(x,y)\right| < c_{k}\:,\quad x, y\in X.
\end{equation}
 We get a family $\{\widetilde{d}_{p,y} : y\in X\}$  of weight functions on $X$ given by
\begin{equation}\label{e:3.10}
\widetilde{d}_{p,y}(x) = \widetilde{d}_p(x,y), \quad  x\in X.
\end{equation}

We introduce weighted Sobolev space $H^m_{\alpha,y}(X,L^p\otimes E)$ as 
\[
H^m_{\alpha,y}(X,L^p\otimes E)=\{u\in C^{-\infty}(X,L^p\otimes E) : e^{\alpha \widetilde{d}_{p,y}}u\in H^m(X,L^p\otimes E)\}
\]
with the Hilbert norm 
\[
\|u\|_{p,m,\alpha,y}=\|e^{\alpha \widetilde{d}_{p,y}}u\|_{p,m}.
\]
By \eqref{(1.1)}, $H^0_{\alpha,y}(X,L^p\otimes E)=L^2_{\alpha,y}(X,L^p\otimes E)$ for any $y\in X$ and $\alpha\in \mathbb R$, and the norm $\|u\|_{p,0,\alpha,y}$ is equivalent to the $L^2$-norm $\|u\|_{p,\alpha,y}$ uniformly in $y\in X$ and $\alpha\in \mathbb R$ with $|\alpha|<c\sqrt{p}$ for any $c$. 

It follows from \cite{Kor-ma-ma} (see also \cite{Kor20}) that there exist $\mu_P>0$ and $p_0\in \NN$ such that, for any $p>p_0$, $\alpha\in \RR$ with $|\alpha|<\mu_P\sqrt{p}$, $m_1. m_2\in \mathbb N$ and $y\in X$, the operator $P_{\mathcal H_p}$ maps $H^{m_1}_{\alpha,y}(X, L^p\otimes E)$ to $H^{m_2}_{\alpha,y}(X, L^p\otimes E)$ with the following norm estimate
\begin{equation}\label{e:PHp}
\left\|P_{\mathcal H_p} : H^{m_1}_{\alpha,y}(X, L^p\otimes E)\to H^{m_2}_{\alpha,y}(X, L^p\otimes E)\right\|\leq C,
\end{equation}
where $C>0$ is independent of $p$, $\alpha$ and $y$.

\begin{prop}\label{t:STp}
Suppose that $K_p$, $p\in \mathbb N$, is  a sequence of bounded linear operators in $L^2(X,L^p\otimes E)$ such that, for any $p\in \mathbb N$, 
\begin{equation}\label{e:PKP}
K_p=P_{\mathcal H_p}K_pP_{\mathcal H_p}
\end{equation}
and there exist $\mu>0$ and $C>0$ such that for any $p\in \mathbb N$, $\alpha\in \mathbb R, |\alpha|<\mu\sqrt{p}$ and $y\in X$, $K_p$ defines a bounded operator in $L^2_{\alpha,y}(X,L^p\otimes E)$ with the norm estimate
\begin{equation}\label{e:Kp-est}
\left\|K_p: L^2_{\alpha,y}(X,L^p\otimes E) \to L^2_{\alpha,y}(X,L^p\otimes E) \right\|\leq C, 
\end{equation}
where $C>0$ is independent of $p$, $\alpha$ and $y$.  Then $K_p$ has smooth Schwartz kernel $K_{p}(x, x^\prime)$, and there exists $\mu_0>0$  such that for any $k\in \mathbb N$, we have
\begin{equation}\label{e:Tfp1}
\big|K_{p}(x, x^\prime)\big|_{{C}^k}\leq C_k p^{n+\frac{k}{2}}
e^{-\mu_0 \sqrt{p} \,d(x, x^\prime)},
\end{equation}
for any $p>p_0$ and $x, x^\prime \in X$ with the constant $C_k>0$, independent of $p$, $x$ and $x^\prime$.
\end{prop}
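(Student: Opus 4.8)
The plan is to deduce the pointwise kernel estimate \eqref{e:Tfp1} from the weighted $L^2$-boundedness \eqref{e:Kp-est} together with the mapping property \eqref{e:PHp} of the generalized Bergman projection $P_{\mathcal H_p}$, via elliptic (Sobolev) regularity with parameter. The key structural input is the identity $K_p = P_{\mathcal H_p} K_p P_{\mathcal H_p}$, which allows one to trade the lack of smoothing properties of $K_p$ itself for the strong smoothing and weighted mapping properties of $P_{\mathcal H_p}$.

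First I would upgrade \eqref{e:Kp-est} from an $L^2_{\alpha,y}$-bound to a bound between weighted Sobolev spaces. Writing $K_p = P_{\mathcal H_p}\,K_p\,P_{\mathcal H_p}$ and inserting \eqref{e:PHp} on both sides, one gets, for $|\alpha|<\mu'\sqrt p$ with $\mu'=\min(\mu,\mu_P)$ and any $m_1,m_2\in\mathbb N$,
\[
\left\|K_p : H^{m_1}_{\alpha,y}(X,L^p\otimes E)\to H^{m_2}_{\alpha,y}(X,L^p\otimes E)\right\|\le C,
\]
with $C$ independent of $p,\alpha,y$, because $P_{\mathcal H_p}$ maps any $H^{m_1}_{\alpha,y}$ boundedly into $H^{0}_{\alpha,y}=L^2_{\alpha,y}$, then $K_p$ acts boundedly on $L^2_{\alpha,y}$ by hypothesis, and a second application of \eqref{e:PHp} lifts back up to $H^{m_2}_{\alpha,y}$. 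The normalization in \eqref{e2.6} — each covariant derivative carrying a factor $p^{-1/2}$ — is crucial here: it is exactly what makes the constant $C$ uniform in $p$.

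Next I would convert the $H^{m_1}\to H^{m_2}$ bound into a pointwise estimate on the Schwartz kernel. By the Sobolev embedding theorem on a manifold of bounded geometry (applied on balls of radius $r_X/2$, say, with constants uniform over $X$ thanks to bounded geometry), for $m_1,m_2$ sufficiently large relative to $n$ and $k$, the operator $K_p$ has a smooth Schwartz kernel, and one bounds $|K_p(x,x')|_{C^k}$ by testing against bump sections concentrated near $x$ and $x'$. Tracking the $p$-dependence: passing from the semiclassical Sobolev norm $\|\cdot\|_{p,m}$ to the ordinary $C^k$-norm on a fixed ball costs a factor $p^{(\dim + k)/2}$ roughly — more precisely the volume factor $p^{n}$ from the two bump sections (each of semiclassical $L^2$-mass $\sim p^{n/2}$ when localized at the natural scale $p^{-1/2}$) and a further $p^{k/2}$ from differentiating $k$ times in the unnormalized connection. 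This produces the prefactor $p^{n+k/2}$ in \eqref{e:Tfp1}. The exponential off-diagonal decay $e^{-\mu_0\sqrt p\, d(x,x')}$ is extracted by the standard Agmon-type/weighted trick: choose $y=x'$ (or $y=x$) and $\alpha$ proportional to $\sqrt p$ with $|\alpha|<\mu'\sqrt p$ in the $H^{m_1}_{\alpha,y}\to H^{m_2}_{\alpha,y}$ bound; then $e^{\alpha d_{x'}(x)} \sim e^{\mu_0\sqrt p\, d(x,x')}$ times the local $C^k$-norm of $K_p(\cdot,x')$ near $x$ is controlled, giving the decay with $\mu_0<\mu'$. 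One must use \eqref{(1.1)} to replace the smoothed weight $\widetilde d_{p,y}$ by the genuine distance $d_y$ at the cost of a bounded multiplicative constant, and \eqref{dist} to ensure the conjugated operators stay uniformly controlled.

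I expect the main obstacle to be making the uniformity in $(p,\alpha,y,x,x')$ genuinely rigorous in the Sobolev-to-pointwise step: one needs the Sobolev embedding and the local elliptic estimates to hold with constants independent of the base point, which is where bounded geometry of $(X,g)$, $L$, and $E$ enters essentially, and one has to be careful that conjugation by the exponential weight $e^{\alpha\widetilde d_{p,y}}$ — which appears inside the covariant derivatives defining $\|\cdot\|_{p,m,\alpha,y}$ — introduces only lower-order, uniformly bounded perturbations thanks to the derivative bounds \eqref{dist}. This is precisely the weighted-estimate technology of \cite{Kor18,Kor20,Kor-ma-ma}, so the argument is an adaptation rather than something new; the bookkeeping of the powers of $p$ is the part that requires the most care.
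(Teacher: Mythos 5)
Your proposal follows essentially the same route as the paper: factor $K_p=P_{\mathcal H_p}K_pP_{\mathcal H_p}$, use \eqref{e:PHp} on both sides to promote the weighted $L^2$ bound \eqref{e:Kp-est} to uniform $H^{m_1}_{\alpha,y}\to H^{m_2}_{\alpha,y}$ bounds, and then apply the refined Sobolev embedding on manifolds of bounded geometry (as in the proof of Theorem~3.6 of \cite{Kor-ma-ma}) to extract the pointwise $C^k$ estimate with the $p^{n+k/2}$ prefactor and the Agmon-type exponential decay from the choice $y=x'$, $\alpha\sim\sqrt p$. Your bookkeeping of the powers of $p$ and the role of \eqref{(1.1)}--\eqref{dist} matches what the cited reference carries out, so the argument is correct.
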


Here $|K_{p}(x, x^\prime)|_{{C}^k}$ denotes the pointwise ${C}^k$-seminorm of the section $K_{p}\in {C}^\infty(X\times X, \pi_1^*(L^p\otimes E)\otimes \pi_2^*(L^p\otimes E)^*)$ at the point $(x, x^\prime)\in X\times X$, defined by $h^L, h^E$ and $g$ and the connections $\nabla^{L^p\otimes E}$ ans $\nabla^{TX}$.

\begin{proof}
Using \eqref{e:PHp}, \eqref{e:PKP} and \eqref{e:Kp-est}, one can easily show that, for any $p\in \mathbb N$, $p>p_0$,  $|\alpha|<\mu_0\sqrt{p}$ with $\mu_1:=\min(\mu,\mu_P)$, and $m_1. m_2\in \mathbb N$, 
the operator $K_{p}$ maps $H^{m_1}_{\alpha,y}(X, L^p\otimes E)$ to $H^{m_2}_{\alpha,y}(X, L^p\otimes E)$ with the following norm estimates:
\begin{equation}\label{e:mm+2}
\left\|K_{p} : H^{m_1}_{\alpha,y}(X, L^p\otimes E)\to H^{m_2}_{\alpha,y}(X, L^p\otimes E)\right\|\leq C_m,
\end{equation}
where $C_m>0$ is independent of $p$, $\alpha$ and $y$.

Now we can use a refined form of the Sobolev embedding theorem as in the proof of \cite[Theorem 3.6]{Kor-ma-ma} and derive the estimates \eqref{e:Tfp1} with any $\mu_0\in (0,\mu_1)$. 
\end{proof}

By Proposition~\ref{t:STp}, $\{T_p\}$ satisfies the condition (ii) of Theorem~\ref{t:charact} with some $\mu_0>0$. We also see, for any $K\in \mathbb N$, that the Schwartz kernel of the remainder 
\[
R_{K,p}=T_p-P_{\mathcal H_p}\left(\sum_{l=0}^K p^{-l}g_l\right)P_{\mathcal H_p}, \quad p\in \NN,
\]
in the asymptotic series satisfies the estimate
\[
\big|R_{K,p}(x, x^\prime)\big|_{{C}_b^k}\leq C_k p^{n-K-1+\frac{k}{2}}
e^{-\mu_0\sqrt{p} \,d(x, x^\prime)}, \quad p\in \mathbb N,\quad x, x^\prime \in X, 
\]
for any $k\geq 0$. In particular, for the local Schwartz kernel $R_{K,p,x_0}(Z,Z^\prime)$ we get for any $x_0\in X$, $Z,Z^\prime\in T_{x_0}X$, $|Z|, |Z^\prime|<\varepsilon^\prime$,  
\[
\left|p^{-n}R_{K,p,x_0}(Z,Z^\prime)\right|_{C^{k}_b(X)} \leq Cp^{-M}e^{-\mu_0\sqrt{p}|Z-Z^\prime|},
\]
with $M=K+1-\frac{k}{2}$, which can be made arbitrarily small with an appropriate choice of $K$, depending on $k$. This allows us to reduce our considerations to an operator of the form $T_{f,p}$, $f\in C^\infty_b(X,\operatorname{End}(E))$. 

The Schwartz kernel of $T_{f,p}$ is given by 
\[
T_{f,p}(x,x^\prime)=\int_X P_p(x,x^{\prime\prime})f(x^{\prime\prime})P_p(x^{\prime\prime},x^{\prime})dv_X(x^{\prime\prime}),
\] 
where $P_{p}(x,x^\prime)$ is the Schwartz kernel of the operator $P_{\mathcal H_p}$ (the generalized Bergman kernel).

By \cite[Theorem 1.1]{Kor-ma-ma} (see also Proposition~\ref{t:STp}), $P_{p}(x,x^\prime)$ satisfies the condition (ii) of Theorem~\ref{t:charact}. By \cite[Theorem 4.3]{Kor-ma-ma}, for any $k\in \mathbb N$, we have the following asymptotic expansion:
\[
p^{-n}P_{p,x_0}(Z,Z^\prime)\cong
\sum_{r=0}^k(F_{0,r,x_0}\mathcal P_{x_0})(\sqrt{p} Z, \sqrt{p}Z^\prime)p^{-\frac{r}{2}}+\mathcal O(p^{-\frac{k+1}{2}}).
\]
Using these facts, we can proceed as in \cite[Section 4.1]{ma-ma08a} to show that the Schwartz kernel of $T_{f,p}$ satisfies the condition (iii) of Theorem~\ref{t:charact}. This completes the proof.

\subsection{Characterization of Toeplitz operators}\label{s:3}

In this section, we assume that a sequence $\{T_p\}$ satisfies the conditions (i)--(iii) of Theorem~\ref{t:charact} and prove that it is a Toeplitz operator in the sense of Definition \ref{d:Toeplitz}. This is an extension of \cite[Theorem 4.9]{ma-ma08a}, and we will follow the lines of its proof. 

Without loss of generality, we can assume that $T_p$ is self-adjoint
We start with analysis of the full off-diagonal asymptotic expansion for the Schwartz kernel of $T_p$ given by the condition (iii). It is local and, therefore, actually the same as in the compact case. By \cite[Proposition 4.11]{ma-ma08a} (see also \cite{Kor20} for a different proof), for the leading coefficient $Q_{0,x_0}(Z,Z^\prime)$ in this expansion, we get
\[
Q_{0,x_0}(Z,Z^\prime)=Q_{0,x_0}(Z,Z^\prime)=:Q_{x_0}
\]
for any $x_0\in X$ and $Z,Z^\prime\in T_{x_0}X$ with some $Q_{x_0}\in \operatorname{End}(E_{x_0})$. 
We define a section $g_0\in C^\infty(X,\operatorname{End}(E))$, setting 
\[
g_0(x_0)= \mathcal Q_{x_0}, \quad x_0\in X.
\] 
Since the family $\mathcal Q_{0,x_0}$  is $C^\infty$-bounded in $x_0$, it is easy to see that $g_0\in C^\infty_b(X,\operatorname{End}(E))$.

Next, by \cite[Proposition 4.17]{ma-ma08a}, we have  for any $x_0\in X$, $Z,Z^\prime\in T_{x_0}X$, $|Z|, |Z^\prime|<\varepsilon^\prime$,  
\[
p^{-n}(T_p-T_{g_0,p})_{x_0}(Z, Z^\prime)\cong \mathcal O(p^{-1}). 
\]

Now we have to convert this pointwise estimate of the Schwartz kernel into operator estimates in weighted $L^2$-spaces.  This is based on the following proposition. 

\begin{prop}\label{p:Kp-point}
Let $K_p : C^\infty_{c}(X,L^p\otimes E) \to C^\infty(X,L^p\otimes E)$, $p\in \mathbb N$, be a sequence of linear operator with smooth Schwartz kernel $K_{p}(x, x^\prime)$. Suppose that there exists $\mu >0$ such that for any $\epsilon_0>0$ 
\begin{equation}\label{e:Kp-point1}
|K_{p}(x, x^\prime)|\leq C e^{-\mu\sqrt{p} \,d(x, x^\prime)}, \quad p\in \mathbb N,\quad x, x^\prime \in X,\quad d(x, x^\prime) >\epsilon_0,
\end{equation}
where $C>0$ is independent of $p$, $x, x^\prime$.
Moreover, suppose that, for any $x_0\in X$, $Z,Z^\prime\in T_{x_0}X$, $|Z|, |Z^\prime|<\varepsilon^\prime$,  we have
\begin{equation}\label{e:Kp-point2}
p^{-n}K_{p,x_0}(Z, Z^\prime)\cong \mathcal O(p^{-1}). 
\end{equation}
Then there exist $\mu_1>0$ and $p_0\in \mathbb N$ such that for any $p>p_0$, $\alpha\in \RR$ with  $|\alpha|<\mu_1 \sqrt{p}$ and $y\in X$, the operator $K_p$ defines a bounded linear operator in $L^2_{\alpha,y}(X,L^p\otimes E)$ with the norm estimate 
\[
\left\|K_p : L^2_{\alpha,y}(X,L^p\otimes E) \to L^2_{\alpha,y}(X,L^p\otimes E) \right\|< C,
\] 
where $C>0$ is independent of $p$, $\alpha$ and $y$.
\end{prop}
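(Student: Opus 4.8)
The plan is to split $K_p$ into a "near-diagonal" part and an "off-diagonal" part, estimate each in the weighted $L^2$-norm by Schur-type testing against the exponential weight $e^{\alpha\widetilde d_{p,y}}$, and then combine. Fix $y\in X$ and $\alpha\in\RR$. For $u\in L^2_{\alpha,y}$ one has $\|K_pu\|_{p,\alpha,y}=\|e^{\alpha\widetilde d_{p,y}}K_p u\|$, and the Schwartz kernel of the conjugated operator $e^{\alpha\widetilde d_{p,y}}K_p e^{-\alpha\widetilde d_{p,y}}$ is $e^{\alpha(\widetilde d_{p,y}(x)-\widetilde d_{p,y}(x'))}K_p(x,x')$. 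Since $|\widetilde d_{p,y}(x)-\widetilde d_{p,y}(x')|\le d(x,x')+2/\sqrt p$ by \eqref{(1.1)}, the weight factor is bounded by $e^{2|\alpha|/\sqrt p}e^{|\alpha|\,d(x,x')}$, so it suffices to show that the operator with kernel $e^{|\alpha|\,d(x,x')}|K_p(x,x')|$ is bounded on $L^2(X)$ uniformly in $p$ (and $y$, $\alpha$) once $|\alpha|<\mu_1\sqrt p$ for a suitable $\mu_1$. I would do this by the Schur test: it is enough to bound $\sup_x\int_X e^{|\alpha|\,d(x,x')}|K_p(x,x')|\,dv_X(x')$ and the symmetric integral.

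For the off-diagonal region $d(x,x')>\varepsilon'$ (or any fixed $\epsilon_0$), hypothesis \eqref{e:Kp-point1} gives $|K_p(x,x')|\le Ce^{-\mu\sqrt p\,d(x,x')}$, so the integrand is bounded by $Ce^{-(\mu\sqrt p-|\alpha|)d(x,x')}$; choosing $\mu_1<\mu$ makes the exponent $\le -\tfrac12\mu\sqrt p\,d(x,x')$, and the integral over $\{d(x,x')>\varepsilon'\}$ is then controlled using the bounded-geometry volume growth bound $v_X(B(x,R))\le Ce^{cR}$ (valid on a manifold of bounded geometry), which for $\sqrt p$ large is dominated by the Gaussian-type decay; this contributes $\cO(e^{-c\sqrt p})$, uniformly. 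For the near-diagonal region $d(x,x')\le\varepsilon'$, I would pass to normal coordinates around $x_0=x$ and use \eqref{e:Kp-point2}: by Definition~\ref{d:equiv} with $k=1$, $p^{-n}|K_{p,x_0}(Z,Z')|\,\kappa^{1/2}\kappa^{1/2}\le Cp^{-1}(1+\sqrt p|Z|+\sqrt p|Z'|)^Me^{-C_0\sqrt p|Z-Z'|}+\cO(e^{-c_0\sqrt p})$. Taking $Z=0$, $Z'$ the coordinate of $x'$, so $|Z'|=d(x,x')$, and integrating $dv_X(x')=\kappa_{x_0}(Z')dv_{T_{x_0}X}(Z')$, the substitution $W=\sqrt p\,Z'$ turns the main term into $p^{-1}\cdot p^{-n}\cdot p^n\int_{\RR^{2n}}(1+|W|)^Me^{-(C_0-|\alpha|/\sqrt p)|W|}dW=\cO(p^{-1})$, again uniformly provided $\mu_1<C_0$; the $\cO(e^{-c_0\sqrt p})$ remainder integrates, using volume growth, to something exponentially small. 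Combining the two regions and symmetrizing gives a uniform Schur bound, hence the claimed operator-norm estimate with $C$ independent of $p$, $\alpha$, $y$, and one takes $\mu_1=\min(\mu,C_0)/2$ say.

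The main obstacle I anticipate is bookkeeping the uniformity in $y$ and $\alpha$ simultaneously while converting the pointwise kernel estimate \eqref{e:Kp-point2}, which is stated in the $C^l_b(X)$-norm with respect to the \emph{center} $x_0$ of the normal coordinate chart, into the plain pointwise bound on $K_p(x,x')$ that feeds the Schur test; one must be careful that the constants $\varepsilon'$, $C_0$, $M$ in Definition~\ref{d:equiv} are genuinely independent of $x_0$ (which is part of the definition) and that the change of variables between $x'$ and its normal coordinate $Z'$ has Jacobian $\kappa_{x_0}$ bounded above and below uniformly in $x_0$—this is where bounded geometry of $(X,g)$ is used. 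A secondary point is justifying the interchange of the weight $e^{\alpha\widetilde d_{p,y}}$ (which involves the base point $y$, unrelated to $x_0$) with the local expansion: this is harmless because the weight only enters through the elementary Lipschitz estimate $|\widetilde d_{p,y}(x)-\widetilde d_{p,y}(x')|\le d(x,x')+2/\sqrt p$, so $y$ never appears in the integrals above. Once these uniformity issues are pinned down, the argument is a routine Schur-test estimate, entirely parallel to the compact-case computation in \cite[Section 4.2]{ma-ma08a} but with the volume-growth bound replacing compactness.
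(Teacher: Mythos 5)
Your proposal is correct and follows essentially the same route as the paper: conjugate by the weight, apply Schur's test, and split into the off-diagonal region (handled by the exponential decay hypothesis plus the triangle inequality and bounded-geometry volume growth) and the near-diagonal region (handled by passing to normal coordinates, rescaling $W=\sqrt p\,Z'$, and absorbing $e^{|\alpha|\,d(x,x')/\sqrt p\cdot|W|}$ into $e^{-C_0|W|}$ for $\mu_1<C_0$). The only cosmetic difference is that you conjugate by $e^{\alpha\widetilde d_{p,y}}$ while the paper uses $e^{\alpha d_y}$ directly; this is immaterial since the two weights are uniformly comparable by \eqref{(1.1)}.
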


\begin{proof}
For any  $\alpha\in \mathbb R$ and $y\in X$, we have a unitary isomorphism 
$L^2_{\alpha,y}(X,L^p\otimes E)\to L^2(X,L^p\otimes E)$ given by multiplication by $e^{\alpha d_y}$.
Therefore, it is equivalent to show that, for any $p\in \mathbb N$, $\alpha\in \RR$ with $|\alpha|<\mu \sqrt{p}$ and $y\in X$, the operator $e^{\alpha d_y} K_p e^{-\alpha d_y}$ defines a bounded linear operator in $L^2(X,L^p\otimes E)$ and 
\[
\left\|e^{\alpha d_y} K_p e^{-\alpha d_y} : L^2(X,L^p\otimes E) \to L^2(X,L^p\otimes E) \right\|\leq C, 
\]
where $C>0$ is independent of $p$, $\alpha$ and $y$.

The Schwartz kernel $K_{p,\alpha,y}(x, x^\prime)$ of the operator $e^{\alpha d_y} K_p e^{-\alpha d_y}$ is given by
\[
K_{p,\alpha,y}(x, x^\prime) =e^{\alpha d(x,y)} K_p(x, x^\prime) e^{-\alpha d(x^\prime,y)}, \quad x, x^\prime \in X.
\]
By Schur's test, we have
\begin{multline*}
\left\|e^{\alpha d_y} K_p e^{-\alpha d_y} : L^2(X,L^p\otimes E) \to L^2(X,L^p\otimes E) \right\|\\
\leq \max\left(\sup_{x\in X}\int_X |K_{p,\alpha,y}(x, x^\prime)| dv_X(x^\prime), \sup_{x^\prime\in X}\int_X |K_{p,\alpha,y}(x, x^\prime)| dv_X(x) \right).
\end{multline*}

Let us estimate the first integral in the right hand side of the last formula. For the second integral, we have similar estimates.  
By \eqref{e:Kp-point1}, it follows that for any $p\in \mathbb N$, $x, x^\prime \in X$ with $d(x, x^\prime) >\epsilon_0$, $y\in X$ and $\alpha\in \mathbb R$, we have 
\begin{align*}
|K_{p,\alpha,y}(x, x^\prime)| \leq & C e^{\alpha d(x,y)} e^{-\mu \sqrt{p} \,d(x, x^\prime)} e^{-\alpha d(x^\prime,y)}\\ = & C e^{\alpha (d(x,y)-d(x, x^\prime)-d(x^\prime,y))} e^{(\alpha-\mu\sqrt{p}) \,d(x, x^\prime)} \leq  C e^{(\alpha-\mu\sqrt{p}) \,d(x, x^\prime)},
\end{align*}
if $\alpha>0$ and
\[
|K_{p,\alpha,y}(x, x^\prime)| \leq C e^{\alpha (d(x,y)+d(x, x^\prime)-d(x^\prime,y))} e^{(-\alpha-\mu\sqrt{p}) \,d(x, x^\prime)}\leq C e^{(-\alpha-\mu\sqrt{p}) \,d(x, x^\prime)}
\]
if $\alpha<0$. Therefore, for any $\mu_1\in (0,\mu)$ there exists $p_0\in \NN$ such that for any $p>p_0$, $y\in X$ and $\alpha\in \mathbb R$ with $|\alpha|<\mu_1 \sqrt{p}$ 
\begin{multline*}
\sup_{x\in X} \int_{d(x, x^\prime) >\epsilon_0} |K_{p,\alpha,y}(x, x^\prime)| dv_X(x^\prime)\\ <C \sup_{x\in X} \int_{d(x, x^\prime) >\epsilon_0} e^{-(\mu-\mu_1)\sqrt{p} \,d(x, x^\prime)} dv_X(x^\prime)<Ce^{-c_0\sqrt{p}},
\end{multline*}
where $C>0$ and $c_0>0$ are independent of $p$, $y$ and $\alpha$. 

On the other hand, using \eqref{e:Kp-point2}, \eqref{e:equiv} and bounded geometry conditions, it is easy to check that there exists $\mu_2>0$ such that for any $p>p_0$, $y\in X$ and $\alpha\in \RR$ with $|\alpha|<\mu_2 \sqrt{p}$,
\begin{multline*}
\sup_{x\in X} \int_{d(x, x^\prime) <\epsilon^\prime} |K_{p,\alpha,y}(x, x^\prime)| dv_X(x^\prime)\\ \leq C_1 \sup_{x_0\in X} \int_{|Z^\prime|<\varepsilon^\prime} |K_{p,\alpha,y,x_0}(0, Z^\prime)| dv_{T_{x_0}X}(Z^\prime) =\mathcal O(p^{-1}).
\end{multline*}
This completes the proof.
\end{proof}

By Proposition~\ref{p:Kp-point}, we infer that, for any $\mu_1\in (0,\mu)$, there exists $p_0\in \mathbb N$ such that for any $p>p_0$ and $\alpha, |\alpha|<\mu_1 \sqrt{p}$, the operator $p(T_p-P_{\mathcal H_p}g_0P_{\mathcal H_p})$ defines a bounded linear operator in $L^2_{\alpha}(X,L^p\otimes E)$ and there exists $C>0$ such that, for any $p>p_0$ and $\alpha\in \RR$ with $|\alpha|<\mu_1 \sqrt{p}$, 
\[
\sup_{y\in X} \left\|p(T_p-P_{\mathcal H_p}g_0P_{\mathcal H_p}) : L^2_{\alpha,y}(X,L^p\otimes E) \to L^2_{\alpha,y}(X,L^p\otimes E) \right\|< C.
\] 
Thus, the condition (ii) of Definition~\ref{d:Toeplitz} holds for the operator $T_p$ for $K=0$. 

Consider the operator $p(T_p-P_{\mathcal H_p}g_0P_{\mathcal H_p})$. It satisfies the conditions (i)--(iii) of Theorem~\ref{t:charact} with $\mu_1$. We can apply the above arguments to these operator to conclude that there exists $g_1\in C^\infty(X,\operatorname{End}(E))$ such that for any $\mu_1\in (0,\mu)$ and $p>p_0$ and $\alpha, |\alpha|<\mu_2 \sqrt{p}$, the operator $p^2(T_p-P_{\mathcal H_p}g_0P_{\mathcal H_p})-p P_{\mathcal H_p}g_1P_{\mathcal H_p}$ defines a bounded linear operator in $L^2_{\alpha}(X,L^p\otimes E)$ and there exists $C>0$ such that, for any $p>p_0$ and $\alpha\in \RR$ with $|\alpha|<\mu_2 \sqrt{p}$, 
\[
\sup_{y\in X} \left\|p(T_p-P_{\mathcal H_p}g_0P_{\mathcal H_p}) : L^2_{\alpha,y}(X,L^p\otimes E) \to L^2_{\alpha,y}(X,L^p\otimes E) \right\|< C.
\] 

We can proceed by induction and prove that $T_p$ is a Toeplitz operator in the sense of Definition \ref{d:Toeplitz}.

\end{document}